\newcommand{\cP}{{\mathcal{P}}}
\theoremstyle{definition}
\newtheorem{defn}{Definition}
\newtheorem{exmp}{Example}
\newtheorem{theorem}{Theorem}
\newtheorem{lm}{Lemma}
\newtheorem{prop}{Proposition}
\numberwithin{figure}{section}
\title{Split domination, independence, and irredundance in Graphs}
\author{Stephen Hedetniemi, Fiona Knoll, Renu Laskar \\ Clemson University}
\begin{document}

\begin{abstract}  In 1997, Kulli and Janakiram \cite{KulliJanakiramSplit} defined the split dominating set: a dominating set $S$ of vertices in a graph $G = (V, E)$ is called {\em split dominating} if the induced subgraph $\langle V \setminus S\rangle$ is either disconnected or a $K_1$. In this paper we introduce the properties split independence and split irredundance. A set $S$ of vertices in a graph $G =(V,E)$ is called a {\em split independent set} if $S$ is independent and the induced subgraph $\langle V \setminus S \rangle$ is either disconnected or a $K_1$. A set $S$ of vertices in a graph $G = (V,E)$ is called a {\em split irredundant set} if for $u \in S$, $u$ has a private neighbor with respect to $V(S)$ and the induced subgraph $\langle V \setminus S\rangle$ is either disconnected or a $K_1$.

\end{abstract}

\maketitle
\thispagestyle{empty}


\begin{section}{Introduction}
Since the introduction of domination and the domination number, numerous papers have been written concerning its relationship with other graph properties, such as independence and irredundance. 

A set of vertices $D \subseteq V(G)$ is a \emph{dominating set} of graph $G$,  if for every vertex $v \in V \setminus D$ there is a vertex $u \in D$ such that $uv \in E(G)$. 
A set of vertices $S \subseteq V(G)$ is an \emph{independent set} of $G$, if the induced subgraph $\langle S \rangle$ has no edges. 
A set of vertices $S \subseteq V(G)$ is an \emph{irredundant set} if for every vertex $v \in S$, there is a vertex $u \in V(G)$ such that $u \in N[S]$ but $u \notin N[S\setminus\{v\}]$, that is, $v$ has its own private neighbor with respect to set $S$.  

Each of these properties has two associated parameters. The well-known parameters are the following:
	\begin{itemize} [leftmargin = *]
		\item[-] Domination number: $\gamma(G) = \min \{|S|: \text{$S$ is a dominating set} \}$,
		\item[-] Upper domination number:  \[\Gamma(G) = \max\{|S|: \text{$S$ is a minimal dominating set}\}\]
		\item[-] Lower independence number:  \[i(G) = \min \{|S|: \text{$S$ is a maximal independent set}\}\]
		\item[-] Upper independence number: \[\beta(G) = \max\{ |S|: \text{$S$ is an independent set} \}\]
		\item[-] Irredundant number: $ir(G) = \min\{|S|: \text{$S$ is a maximal irredundant set}\}$
		\item[-] Upper irredundant number: $IR(G) = \max\{|S|: \text{$S$ is an irredundant set}\}$
	\end{itemize}
 Relating these parameters, Cockayne et al.\cite{CockayneEtAl} in 1978 defined the domination inequality chain:
	\[ir(G) \leq \gamma(G) \leq i(G) \leq \beta(G) \leq \Gamma(G) \leq IR(G) .\]
This domination chain has given rise to many interesting research results. In this paper we connect these domination related parameters with the connectivity of graphs and investigate analogous chain relating domination and connectivity.

Two key concepts associated with these properties are the concepts hereditary and superhereditary.
 \begin{defn}
We say a property $P$ is {\em hereditary} if, for all sets $S$ that satisfy $P$, every set $S'\subseteq S$ also satisfies $P$.
\end{defn}

\begin{defn}
We say a property $P$ is {\em superhereditary} if, for all sets $S$ that satisfy $P$, every set $S'\supseteq S$ also satisfies $P$.
\end{defn}
In addition,
\begin{defn}
 Let $S$ be a set satisfying a property $\cP$.
\begin{itemize}
	\item The set $S$ is \emph{minimal} with respect to $\cP$ if no subset $S' \subseteq S$ satisfies $\cP$.
	\item The set $S$ is \emph{maximal} with respect to $\cP$ if no superset $S'\supseteq S$ satisfies $\cP$.
	\item The set $S$ is \emph{1-minimal} with respect to $\cP$ if for any $v \in S$, $S\setminus\{v\}$ does not satisfy $\cP$.
	\item The set $S$ is \emph{1-maximal} with respect to $\cP$ if for any $v \notin S$, $S \cup \{v\}$ does not satisfy $\cP$.	
\end{itemize}
\end{defn}
In general, 1-minimal property is not necessarily a minimal property on a set $S$ and 1-maximal property is not  necessarily a maximal property on a set $S$. However, the following are well known  \cite{HedetBook}.
\begin{prop}
If the property $\cP$ on a set $S$  is superhereditary, then $\cP$ is minimal if and only if $\cP$ is 1-minimal.
\end{prop}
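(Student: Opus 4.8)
The plan is to prove the two implications separately, observing that only the reverse implication actually invokes superheredity.

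For the forward direction, I would assume $S$ is minimal with respect to $\cP$, so that by definition no proper subset of $S$ satisfies $\cP$. Fixing an arbitrary $v \in S$, the set $S \setminus \{v\}$ is a proper subset of $S$ and therefore fails $\cP$; since $v$ was arbitrary, $S$ is $1$-minimal. This step uses nothing beyond the definitions and in fact holds for any property $\cP$ whatsoever.

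The reverse direction is where superheredity is needed, and I would argue by contraposition. Suppose $S$ is $1$-minimal but not minimal. Then some proper subset $S' \subsetneq S$ satisfies $\cP$. Choose a vertex $v \in S \setminus S'$, which is possible because the containment is proper; then $S' \subseteq S \setminus \{v\} \subsetneq S$. Since $\cP$ is superhereditary and $S'$ satisfies $\cP$, every superset of $S'$ satisfies $\cP$; in particular $S \setminus \{v\}$ satisfies $\cP$, contradicting the assumption that $S$ is $1$-minimal. Hence $1$-minimality forces minimality, completing the equivalence.

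The only point requiring care — and the closest thing to an obstacle, though it is minor — is the bookkeeping between the notion ``proper subset'' used in the definition of minimal and the single-vertex deletions used in the definition of $1$-minimal. Superheredity is precisely the device that bridges this gap: it lets one upgrade an arbitrary proper subset witnessing non-minimality into a single-vertex-deletion witness that $1$-minimality can detect. No computation is involved; the proof is a short unwinding of the definitions.
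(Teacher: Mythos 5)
Your argument is correct: the forward direction is purely definitional, and your use of superheredity to promote an arbitrary proper subset $S' \subsetneq S$ satisfying $\cP$ to the single-deletion set $S \setminus \{v\}$ (for any $v \in S \setminus S'$) is exactly the standard argument. The paper itself gives no proof of this proposition --- it is cited as well known from the Haynes--Hedetniemi--Slater book --- so there is nothing to diverge from; your only implicit (and reasonable) adjustment is reading the paper's ``no subset $S' \subseteq S$ satisfies $\cP$'' as ``no \emph{proper} subset,'' which is clearly the intended meaning.
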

\begin{prop}
If the property $\cP$ on a set $S$  is hereditary, then $\cP$ is maximal if and only if $\cP$ is 1-maximal.
\end{prop}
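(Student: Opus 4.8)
The plan is to prove the two implications of the biconditional separately, observing that one direction is completely general while the other is where the heredity hypothesis does its work.

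First I would dispatch the easy direction: maximal $\Rightarrow$ 1-maximal, which requires no assumption on $\cP$ at all. If $S$ is maximal with respect to $\cP$, then by definition no (proper) superset of $S$ satisfies $\cP$. In particular, for any $v \notin S$, the set $S \cup \{v\}$ is a proper superset of $S$, so it fails $\cP$; this is precisely the assertion that $S$ is 1-maximal.

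For the converse, 1-maximal $\Rightarrow$ maximal, I would argue by contradiction. Suppose $S$ satisfies $\cP$ and is 1-maximal but not maximal. Then there is a proper superset $S' \supsetneq S$ with $S'$ satisfying $\cP$. Since the containment is proper, pick $v \in S' \setminus S$; then $S \cup \{v\} \subseteq S'$. Because $\cP$ is hereditary and $S'$ satisfies $\cP$, every subset of $S'$ satisfies $\cP$, and in particular $S \cup \{v\}$ does. As $v \notin S$, this contradicts the 1-maximality of $S$. Hence $S$ is maximal, completing the proof.

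The only place the hypothesis is used is this last step, where heredity lets us pass from "$S'$ satisfies $\cP$" down to "$S \cup \{v\}$ satisfies $\cP$"; without it, a single-vertex extension of $S$ could fail $\cP$ even when a larger superset satisfies it, so the implication would break. I do not anticipate any genuine obstacle here — the argument is a short unwinding of the definitions, dual to the proposition for superhereditary properties stated just above — so the write-up amounts mainly to making the quantifiers and the choice of $v \in S'\setminus S$ explicit.
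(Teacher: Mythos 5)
Your proof is correct; it is the standard argument, and the paper itself gives no proof of this proposition, simply citing it as well known from the domination book \cite{HedetBook}. Both directions are handled properly, and you correctly isolate heredity as the ingredient needed to pass from an arbitrary proper superset $S'$ satisfying $\cP$ down to the one-vertex extension $S \cup \{v\}$, which is exactly where the implication from 1-maximal to maximal would otherwise fail.
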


In 1997, Kulli and Janakiram \cite{KulliJanakiramSplit} considered the relationship between the property domination and the property connectivity, which is neither hereditary nor superhereditary. 
\begin{defn}
A \emph{vertex cut set} is a set of vertices $S \subseteq V(G)$ such that $G\setminus S$ is either a $K_1$ or a disconnected graph. The \emph{connectivity} $k(G)$ of a graph $G$ is the size of the smallest vertex cut set of graph $G$.
\end{defn}

Kulli and Janakiram combined the concepts of domination and connectivity to form the definition of a split dominating set. Given a graph $G$ with no isolate vertices, a dominating set $S \subseteq V(G)$ is a \emph{split dominating set} if the induced subgraph $\langle V \setminus S \rangle$ is either disconnected or a $K_1$. In 2000, Kulli and Jankariam continued their work and introduced \emph{nonsplit domination}  \cite{KulliJanakiramNonsplit} and in 2005, \emph{strong split domination}  \cite{KulliJanakiramStrong}. Furthering this work, Chelvam and Chellathurai \cite{ChelvamChellathurai} found bounds on the parameters of both split and nonsplit domination and discovered the relationship between these parameters. In 2010, Bibi and Selvakumar \cite{BibiSelvakumar} combined the inverse dominating set and a split domination set, resulting in the \emph{inverse split domination set}. Given a minimal dominating set $D$, if $V\setminus D$ contains a dominating set $D'$, $D'$ is called an \emph{inverse split dominating set}. In this paper, we combine the concept of connectivity with the properties independence and irredundance.

\end{section}

\begin{section}{Split Domination}
For the rest of this paper, let $G$ be a finite, undirected, connected graph that does not contain loops or multiple edges.

\begin{defn}{(Kulli and Janakiram \cite{KulliJanakiramSplit})} \label{def: split_set}
A dominating set $S \subseteq V(G)$ is a \emph{split dominating set} if the induced subgraph $\langle V \setminus S \rangle$ is either disconnected or a $K_1$.
\end{defn}

\begin{defn} \cite{KulliJanakiramSplit} \label{def: minimal split_set}
	A split dominating set $S$ is a \emph{minimal split dominating set} if
		\begin{enumerate}
			\item every vertex $v\in S$ has a private neighbor with respect to $S$ or
			\item for every vertex $v \in S$, the induced graph $\langle (V\setminus S) \cup \{u\} \rangle$ is connected.
		\end{enumerate}
\end{defn}

\begin{defn} \cite{KulliJanakiramSplit} Let $G = (V,E)$ be a graph. Then
	\begin{itemize}
		\item $\gamma_{s}(G) = \min \{|S|: \text{S is a split dominating set}\}$ is the \emph{split domination number}, and
		\item $\Gamma_{s}(G) = \max \{|S|: \text{S is a minimal split dominating set}\}$ is the \emph{upper split domination number}.
	\end{itemize}
\end{defn}

Some known inequalities and bounds that are of interest are the following:
\begin{itemize}
	\item \cite{KulliJanakiramSplit} $\gamma (G)  \leq \gamma_s (G)$ 
	\item \cite{KulliJanakiramSplit} $k(G) \leq \gamma_s(G)$ 
	\item \cite{KulliJanakiramSplit} $\gamma_s (G) \leq n \cdot \Delta(G)/ (\Delta(G) +1)$ where $|V| =n$ and $\Delta(G)$ is the maximum degree of $G$
	\item \cite{KulliJanakiramSplit} When $\text{diam}(G) = 2$, $\gamma_s (G) \leq \delta (G)$ where $\delta(G)$ is the minimum degree of $G$ 
	\item \cite{ChelvamChellathurai} $\gamma(G) + \gamma_s (G) \leq n$ 
\end{itemize}

\end{section}

\begin{section}{Independence and Split Domination}
In this section, we combine the properties independence and connectivity.

\begin{defn} \label{def: indep split_set}
A set $S$ is a \emph{split independent set} if
	\begin{enumerate}
		\item $S$ is independent and 
		\item the induced graph $\langle V \setminus S \rangle$ is either disconnected or a $K_1$.
	\end{enumerate}
\end{defn}

First, we note that a split independent set is not necessarily split dominating.
Consider a path $P_n$, $n \geq 4$. This path can be disconnected by the removal of one vertex, but that one vertex is not a dominating set.
We also note that not every graph contains a split independent set. For example, a complete graph $K_n$ and a wheel $W_n$, $n\geq 3$ do not contain a split independent set. More generally,

\begin{prop} 
A 2-tree graph $G=(V,E)$ does not contain a split independent set.
\end{prop}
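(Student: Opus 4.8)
The plan is to prove, by induction on $n = |V(G)|$, the following stronger statement: for every 2-tree $G$ on $n$ vertices and every independent set $S \subseteq V(G)$, the induced subgraph $\langle V \setminus S\rangle$ is connected and has at least two vertices. This immediately gives the proposition, since such a subgraph is neither disconnected nor a $K_1$. I would first dispose of the easy points: $S = \emptyset$ is never split independent because $G$ is connected with at least two vertices, and the base case $n = 3$ forces $G = K_3$, whose only independent sets are $\emptyset$ and singletons $\{v\}$, with $\langle V\setminus S\rangle$ equal to $K_3$ or $K_2$ in these two cases. (I would also note that this statement is read with the usual convention that a 2-tree has at least three vertices, since deleting one vertex of $K_2$ leaves a $K_1$.)

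For the inductive step, let $n \ge 4$ and assume the claim for 2-trees on $n-1$ vertices. By the recursive definition, $G$ is obtained from a 2-tree $G'$ on $n-1 \ge 3$ vertices by adding a vertex $w$ adjacent to both endpoints $u,v$ of an edge $uv \in E(G')$; in particular $\deg_G(w) = 2$ and $G - w = G'$. (Equivalently, one may invoke that a 2-tree on at least four vertices is chordal with clique number $3$, hence has a simplicial vertex of degree exactly $2$ whose deletion again yields a 2-tree.) Given an independent set $S$ of $G$, I would split into two cases. If $w \in S$, then $u,v \notin S$ because $S$ is independent and $u,v \in N_G(w)$; here $\langle V(G)\setminus S\rangle = \langle V(G')\setminus (S\setminus\{w\})\rangle$, and since $S\setminus\{w\}$ is an independent set of $G'$, the inductive hypothesis gives that this graph is connected with at least two vertices. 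If $w \notin S$, then $S$ is an independent set of $G' = G - w$, so by induction $\langle V(G')\setminus S\rangle$ is connected with at least two vertices; as $uv$ is an edge and $S$ is independent, at least one of $u,v$, say $u$, lies outside $S$, so in $G - S$ the vertex $w$ is joined to $u$, a vertex of the connected graph $\langle V(G')\setminus S\rangle$. Hence $\langle V(G)\setminus S\rangle$ is connected with at least three vertices, completing the induction.

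There is no deep obstacle in this argument; the points that require care are, first, strengthening the inductive statement to "connected and at least two vertices" rather than merely "connected," which is exactly what rules out the $K_1$ possibility; second, using independence of $S$ at precisely the two places where it matters — to force $u,v\notin S$ when $w\in S$, and to guarantee that at most one of $u,v$ is deleted when $w\notin S$ — so that $w$ always remains attached to what is left of $G'$; and third, the structural fact that a 2-tree on at least four vertices always contains a removable degree-$2$ vertex, which is precisely what makes the induction go through. I would present these three observations in that order.
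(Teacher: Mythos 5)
Your proof is correct, but it follows a genuinely different route from the paper's. The paper argues directly and very briefly: any set whose removal disconnects a 2-tree (or leaves a $K_1$) must in effect break up a triangle, and doing so requires deleting two vertices of that triangle, which are adjacent; hence no independent set can be a cut set. This amounts to an appeal to the structural fact that every separating set in a 2-tree contains two adjacent vertices (the minimal separators are edges), and the paper leaves that fact essentially unargued. You instead induct on the recursive construction of 2-trees, peeling off a simplicial vertex $w$ of degree $2$ and proving the stronger invariant that deleting \emph{any} independent set leaves a connected induced subgraph on at least two vertices; both cases $w\in S$ and $w\notin S$ reduce cleanly to the smaller 2-tree precisely because the two neighbors $u,v$ of $w$ are adjacent, so an independent $S$ can contain at most one of them. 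Your version is longer but self-contained and rigorous where the paper is terse, and your remark that the statement requires the convention $|V|\ge 3$ (since $K_2$ minus one vertex is a $K_1$) patches an edge case the paper silently assumes away; the paper's version, in exchange, isolates in one sentence the single structural observation that makes the proposition transparent.
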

\begin{proof}
 Let $G=(V,E)$ be a  2-tree graph. Then in order to disconnect the graph, one must at least disconnect a $K_3$. To disconnect a $K_3$, that is to have a $K_1$, two vertices must be removed; however, these vertices are adjacent.
\end{proof}

\begin{defn} \label{def: maxindep split_set}
A \emph{maximal split independent} set $S$ is a split independent set such that for any $v \in V \setminus S$ at least one of the following is true:
	\begin{enumerate}
		\item $S \cup \{v\}$ is not independent, i.e. $S$ is an independent dominating set
		\item the induced graph $\langle V-\{S \cup \{v\}\} \rangle$ is connected.
	\end{enumerate}
\end{defn}

\begin{defn} Let $G=(V,E)$ be a graph. Then
	\begin{itemize}
		\item $i_s(G) = \min \{|S|: \text{S is a maximal split independent set}\}$ is the \\ \emph{lower split independent number}, and
		\item $\beta_s(G) = \max \{|S|: \text{S is a maximal split independent set}\}$ is the \emph{split independent number}.
	\end{itemize}
\end{defn}

\begin{lm} \label{lemma: maxindep_minsplit}
If a split independent set exists for graph $G=(V,E)$, then a maximal split independent set $S$ for $G$ is a minimal split dominating set.

\end{lm}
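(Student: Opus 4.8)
The plan is to verify directly that a maximal split independent set $S$ satisfies the definition of a minimal split dominating set (Definition~\ref{def: minimal split_set}), which breaks into two parts: showing $S$ is a split dominating set, and showing that $S$ meets one of the two minimality clauses. The minimality clause is essentially free: since $S$ is independent, every $v \in S$ is its own private neighbor with respect to $S$, because $v \in N[v] \subseteq N[S]$ while $v \notin N[S\setminus\{v\}]$ (as $v$ has no neighbor in $S\setminus\{v\}$ and $v \notin S\setminus\{v\}$), so clause~(1) of Definition~\ref{def: minimal split_set} holds. Likewise, the requirement that $\langle V\setminus S\rangle$ be disconnected or a $K_1$ is already part of the hypothesis that $S$ is split independent. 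So the one substantive step is to prove that $S$ dominates $G$.

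I would prove domination by contradiction. Suppose some $v \in V\setminus S$ has no neighbor in $S$; then $S \cup \{v\}$ is still independent, so, because $S$ is a maximal split independent set, clause~(2) of Definition~\ref{def: maxindep split_set} must hold for $v$, i.e.\ $\langle V\setminus(S\cup\{v\})\rangle$ is connected. Now use the structure of $\langle V\setminus S\rangle$ guaranteed by split independence. If $\langle V\setminus S\rangle = K_1$, then $V\setminus S=\{v\}$; but $G$ is connected with at least two vertices (if $G=K_1$ there is no set satisfying Definition~\ref{def: maxindep split_set}, so the statement is vacuous), hence $v$ has a neighbor, which must lie in $S$ --- contradicting the assumption. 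If $\langle V\setminus S\rangle$ is disconnected, let $C$ be the component of $\langle V\setminus S\rangle$ containing $v$. All neighbors of $v$ lie in $V\setminus S$, hence in $C$, and $v$ has at least one neighbor, so $|C| \geq 2$; therefore $C\setminus\{v\}$ is nonempty and, since deleting a vertex introduces no new edges, it remains separated from the other component(s) of $\langle V\setminus S\rangle$. Thus $\langle V\setminus(S\cup\{v\})\rangle$ is disconnected --- again a contradiction. Hence $S$ is dominating, so $S$ is a split dominating set, and with the first paragraph we conclude $S$ is a minimal split dominating set.

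The main obstacle is the domination step, and within it the careful handling of the two structural cases for $\langle V\setminus S\rangle$ together with the degenerate conventions (the null graph is not connected, and $G=K_1$ has no maximal split independent set) that keep those cases from causing trouble. Once domination is established, the rest is just the split analogue of the classical fact that an independent dominating set is automatically a minimal dominating set.
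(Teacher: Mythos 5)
Your proposal is correct and follows essentially the same route as the paper: domination is proved by contradiction, using maximality of $S$ together with the disconnected/$K_1$ structure of $\langle V\setminus S\rangle$ to show that $S\cup\{v\}$ would be a larger split independent set, and minimality follows from independence (each $v\in S$ being its own private neighbor, which is just the contrapositive form of the paper's argument). Your case analysis in the domination step is in fact spelled out more carefully than the paper's Case~2, but the underlying idea is identical.
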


\begin{proof}
Let $S$ be a maximal split independent set. We  will first show that $S$ is a dominating set. Clearly,  $v \in S$ is dominated, so we will consider $v \in V \setminus S$. Suppose $v$ is not dominated by $S$, that is $v \notin N[S]$. Then $\{v\} \cup S$ is an independent set, but it cannot be a vertex-cut set; otherwise, it would contradict the maximality of $S$. So, $\langle V \setminus \{\{v\} \cup S\} \rangle$ is connected. We have two cases to consider
\begin{itemize}
	\item Case 1: $v$ is an isolate.  In our scenario, $G$ has no isolates.
	\item Case 2: $v$ is not an isolate. If $v$ is not an isolate, then $v$ lies in the neighborhood of a vertex $u$ not in $S$. Hence, eliminating $v$ in addition to the set $S$ from the set of vertices would not create a connected graph, that is $\langle V \setminus \{\{v\} \cup S\} \rangle$ is disconnected. As $\{v\} \cup S$  is independent and a vertex-cut set, $\{v\} \cup S$  is a split indpendent set, which contradicts the maximality of $S$.
\end{itemize}
So, $v$ must be dominated by $S$ and as a result, $S$ is a dominating set, more specifically a split dominating set.

Now we want to show that $S$ is a minimal split dominating set. Suppose $S$ is not minimal split dominating set. Then there is a vertex $v \in S$ such that $S \setminus \{v\}$ is split dominating and hence $v$ is dominated by $S$ which is a contradiction to the independence of $S$.
\end{proof}

\begin{prop} \label{prop:maxindep_minsplit_param} For a graph G, 
	\[\beta_s(G) \leq \Gamma_s(G) \quad \text{and} \quad \gamma_s(G) \leq i_s(G) \]
\end{prop}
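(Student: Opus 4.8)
The plan is to obtain both inequalities as immediate consequences of Lemma~\ref{lemma: maxindep_minsplit}. Throughout I assume $G$ admits at least one split independent set, so that $i_s(G)$ and $\beta_s(G)$ are meaningful; since $V(G)$ is finite, every split independent set is contained in a maximal one, so the family $\mathcal{M}$ of maximal split independent sets of $G$ is nonempty. Let $\mathcal{N}$ denote the family of minimal split dominating sets of $G$. Lemma~\ref{lemma: maxindep_minsplit} says precisely that $\mathcal{M}\subseteq\mathcal{N}$; in particular $\mathcal{N}\neq\emptyset$, so $\gamma_s(G)$ and $\Gamma_s(G)$ are also defined, and all four parameters in the statement make sense.

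For the first inequality, I would simply observe that maximizing cardinality over the smaller family $\mathcal{M}$ cannot exceed maximizing over $\mathcal{N}$:
\[
\beta_s(G)=\max_{S\in\mathcal{M}}|S|\ \le\ \max_{S\in\mathcal{N}}|S|=\Gamma_s(G).
\]
For the second inequality, the one preliminary remark is that $\gamma_s(G)=\min_{S\in\mathcal{N}}|S|$: a split dominating set of minimum cardinality admits no proper subset that is split dominating (such a subset would be strictly smaller), hence is minimal, so the minimum over all split dominating sets is already attained on $\mathcal{N}$. Then, minimizing over the smaller family $\mathcal{M}$ can only increase the value:
\[
\gamma_s(G)=\min_{S\in\mathcal{N}}|S|\ \le\ \min_{S\in\mathcal{M}}|S|=i_s(G).
\]

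There is essentially no hard step here: the entire content is packaged in Lemma~\ref{lemma: maxindep_minsplit}, and what remains is the routine fact that passing to a subfamily of sets cannot decrease a maximum or increase a minimum of cardinalities, together with the standard observation that a minimum-size split dominating set is automatically minimal. The only point needing a moment's care is the existence bookkeeping in the first paragraph, which is required so that $\gamma_s(G)$, $\Gamma_s(G)$, $i_s(G)$, and $\beta_s(G)$ are all defined before the inequalities are asserted.
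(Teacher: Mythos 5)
Your proposal is correct and follows essentially the same route as the paper, which simply cites Lemma~\ref{lemma: maxindep_minsplit}; you have merely written out the routine containment-of-families argument (maximal split independent sets form a subfamily of minimal split dominating sets, so the max can only grow and the min can only shrink when passing to the larger family), plus the standard remark that a minimum-size split dominating set is minimal. The explicit existence bookkeeping you add is a reasonable clarification, not a departure from the paper's argument.
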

\begin{proof}
It follows from Lemma \ref{lemma: maxindep_minsplit}.
\end{proof}

\end{section}

\begin{section}{Irredundance and Split Domination}

We now consider the property of irredundance with the concept of connectivity.

\begin{defn} \label{def: irred split_set}
A \emph{split irredundant set} is a set $S$ such that 
	\begin{enumerate}
		\item for $u \in S$, $u$ has a private neighbor with respect to $V(S)$ and
		\item the induced graph $\langle V \setminus S \rangle$ is either disconnected or a $K_1$.
	\end{enumerate}
\end{defn}

Similar to a split independent set, a split irredundant set is not necessarily a split dominating set nor does it exist in every situation. In Figure \ref{fig:IrredNotDom}, the set $\{u_1, u_2, u_3\}$ is a split irredundant set, but it is not a split dominating set. As with split independent sets, a split irredundant set does not exist for wheels $W_n$ and complete graphs $K_n$, $n \geq 3$.

\begin{figure}[h]
\begin{center}
\begin{tikzpicture}[every loop/.style={}]
\draw (0,0) circle (.3) node (A) {$u_1$}
(2,0) circle (.3) node (B) {$u_2$}
(4,0) circle (.3) node (C) {$u_3$}
(2/3,1.5) circle (.3) node (D) {$v_1$}
(4/3,1.5) circle (.3) node (E) {$v_2$}
(8/3,1.5) circle (.3) node (F) {$v_3$}
(10/3,1.5) circle (.3) node (G) {$v_4$}
(1,3.1) circle (.3) node (H) {$w_1$}
(3,3.1) circle (.3) node (J) {$w_2$};
\draw[line width=.5pt] (A) edge (B);
\draw[line width=.5pt] (B) edge (C);
\draw[line width=.5pt] (A) edge (D);
\draw[line width=.5pt] (B) edge (E);
\draw[line width=.5pt] (D) edge (H);
\draw[line width=.5pt] (E) edge (H);
\draw[line width=.5pt] (B) edge (F);
\draw[line width=.5pt] (C) edge (G);
\draw[line width=.5pt] (F) edge (J);
\draw[line width=.5pt] (G) edge (J);

\end{tikzpicture}
\end{center}
\caption{In the figure above, $\{u_1,u_2,u_3\}$ is a maximal split irredundant set; however, it is not a split dominating set. \label{fig:IrredNotDom}}
\end{figure}
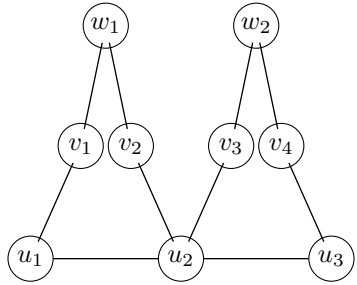

\begin{defn} \label{def: max irred split_set}
A \emph{maximal split irredundant set} $S$ is a split irredundant set such that for every $v \in V \setminus S$ one of the following holds true:
	\begin{enumerate}
		\item $v$ does not have a private neighbor with respect to $V(S \cup \{v\})$  or
		\item the induced graph $\langle V \setminus \{S \cup v\} \rangle $ is connected
	\end{enumerate}
\end{defn}

\begin{defn} Let $G=(V,E)$ be a graph. Then
	\begin{itemize}
		\item $ir_s(G) = \min \{|S|: \text{S is a maximal split irredundant set}\}$ is the \emph{lower split irredundance number}, and
		\item $\mbox{IR}_s(G) = \max \{|S|: \text{S is a maximal split irredundant set}\}$ is the \emph{upper split irredundance number} 
	\end{itemize}
\end{defn}	
	
\begin{lm} \label{lemma: minsplit_maxirred}
If a split irredundant set is defined for $G$, then a minimal split dominating set $S$ is a maximal split irredundant set.
\end{lm}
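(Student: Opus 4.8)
The plan is to show that a minimal split dominating set $S$ satisfies both clauses required of a maximal split irredundant set, proceeding in two stages: first that $S$ is split irredundant, then that it is maximal. For the first stage, I note that $S$ is a split dominating set, so $\langle V \setminus S\rangle$ is disconnected or $K_1$ by definition, which already gives clause (2) of Definition~\ref{def: irred split_set}. It remains to verify clause (1): every $u \in S$ has a private neighbor with respect to $V(S)$. Here I would invoke Definition~\ref{def: minimal split_set} of minimal split dominating set, which offers two alternatives. If alternative (1) holds, every vertex of $S$ has a private neighbor and we are done. If alternative (2) holds, I need to argue that $S$ is nonetheless irredundant; the idea is that for any $v \in S$, removing $v$ from $S$ leaves $\langle (V\setminus S)\cup\{v\}\rangle$ connected, so $S\setminus\{v\}$ is not split dominating, hence either it fails to dominate (giving $v$ an external private neighbor, or $v$ itself as its own private neighbor) or the connectivity condition fails — but the latter is exactly what alternative (2) asserts does hold, so $S \setminus \{v\}$ must fail to dominate, which forces $v$ to have a private neighbor with respect to $S$. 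Thus in both cases $S$ is split irredundant.

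For the second stage, I would take an arbitrary $v \in V \setminus S$ and show that at least one clause of Definition~\ref{def: max irred split_set} holds for $v$. Since $S$ is a dominating set, $v$ is adjacent to some vertex of $S$; the natural move is to use this adjacency to argue that $v$ cannot acquire a private neighbor once added to $S$, or else that the cut condition breaks. Concretely, consider $S \cup \{v\}$: any private neighbor of $v$ with respect to $V(S\cup\{v\})$ would have to be a vertex adjacent to $v$ but to no vertex of $S$ (and not in $S\cup\{v\}$), or $v$ itself — but $v$ is dominated by $S$, so $v$ is not its own private neighbor, and one would hope the domination structure rules out external private neighbors too. If a private neighbor of $v$ does exist, I would then need to show clause (2): $\langle V \setminus (S\cup\{v\})\rangle$ is connected. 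This is where I would lean on the minimality of $S$ as a split dominating set again, possibly combined with the fact that $v$ was already dominated.

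The main obstacle I anticipate is the second stage — specifically ruling out the case where $v \in V\setminus S$ simultaneously has a private neighbor with respect to $V(S\cup\{v\})$ and $\langle V\setminus(S\cup\{v\})\rangle$ is disconnected (or $K_1$), since if both could happen then $S\cup\{v\}$ would be a split irredundant set properly containing $S$, contradicting nothing yet but blocking maximality. I suspect the resolution uses the interplay between domination and the cut condition in the definition of \emph{minimal} split dominating set: if $\langle V\setminus(S\cup\{v\})\rangle$ is disconnected, one wants to derive that $\langle(V\setminus S)\cup\{w\}\rangle$ is connected for some $w$, or that some vertex of $S$ loses its private neighbor, contradicting minimality of $S$. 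Getting the quantifiers and the two alternatives of Definition~\ref{def: minimal split_set} to line up correctly here — rather than the routine domination bookkeeping in the first stage — is the delicate point, and I would present a careful case analysis on which alternative of minimality $S$ satisfies.
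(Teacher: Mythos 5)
Your first stage breaks down exactly where it leaves the routine case. In the subcase where alternative (2) of the definition of a minimal split dominating set holds, you argue: $\langle(V\setminus S)\cup\{v\}\rangle$ is connected, so $S\setminus\{v\}$ is not split dominating, and since ``the connectivity condition fails'' is the disjunct that holds, ``$S\setminus\{v\}$ must fail to dominate.'' That inference is a non sequitur: from the disjunction together with the fact that the connectivity disjunct holds, you cannot conclude that the domination disjunct holds as well; the connectedness of $\langle(V\setminus S)\cup\{v\}\rangle$ is by itself the reason $S\setminus\{v\}$ fails to be split dominating, so minimality places no constraint on whether $S\setminus\{v\}$ still dominates. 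And it can: let $G$ be $K_{2,3}$ with parts $\{u,s,z\}$ and $\{x,y\}$ plus the extra edge $us$. Then $S=\{u,s,z\}$ is a minimal split dominating set ($\langle\{x,y\}\rangle$ is disconnected; $\{u,s\}$ does not dominate $z$, while $\{u,z\}$ and $\{s,z\}$ leave connected complements), yet $u$ has no private neighbor with respect to $S$, since $N[\{s,z\}]=V$; so $S$ is not split irredundant. Note that $\{x,y\}$ is a split irredundant set of this $G$, so the lemma's existence hypothesis does not close the gap; this is a genuine obstruction to your argument, not a bookkeeping issue (the paper's own proof passes over the same point with the word ``Clearly'').

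Conversely, the part you single out as the delicate one --- maximality --- is in fact immediate and is essentially the paper's entire proof. Since $S$ is dominating, $N[S]=V$. For $v\in V\setminus S$, a private neighbor of $v$ with respect to $S\cup\{v\}$ would be a vertex $w\in N[v]$ with $w\notin N[(S\cup\{v\})\setminus\{v\}]=N[S]$, and no such $w$ exists; this also covers $w=v$, since $v\in N[S]$. Hence clause (1) of the definition of a maximal split irredundant set holds for every $v\in V\setminus S$, and the cut condition on $\langle V\setminus(S\cup\{v\})\rangle$ never needs to be inspected, so the careful case analysis you anticipate there is unnecessary. In short, your proposal misplaces the difficulty: the second stage is a one-line consequence of domination, while the first stage, as written, contains the real gap and cannot be repaired along the lines you sketch.
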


\begin{proof}
	Suppose $S$ is a minimal split dominating set. Then for $u\in S$ either
		\begin{enumerate}
			\item $S \setminus \{u\}$ is not dominating or
			\item the induced graph $\langle (V \setminus S) \cup \{u\}\rangle$ is connected.
		\end{enumerate}
Clearly, $S$ is then split irredundant.\\
We now want to show $S$ is a maximal split irredundant set, that is, for $v \in V\setminus S$ 
	\begin{enumerate}
		\item $v$ does not have a private neighbor with respect to $V(S \cup \{v\})$  or
		\item $\langle V \setminus \{S \cup v\}\rangle$ is connected.
 	\end{enumerate}			
Let $v$ be an arbitrary vertex in $V \setminus S$. Since $S$ is a minimal split dominating set, $v \in N[S]$ and hence, $S \cup \{v\}$ is not irredundant.
As a result, $S$ is a maximal split irredundant set. 	
\end{proof}

\begin{prop} \label{prop:minsplit_maxirred_param}
For a graph $G$,
	\[\Gamma_s(G) \leq \mbox{IR}_s(G) \quad \text{and} \quad ir_s(G) \leq \gamma_s(G)\]
\end{prop}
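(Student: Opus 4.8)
The plan is to derive both inequalities directly from Lemma \ref{lemma: minsplit_maxirred}, exactly as Proposition \ref{prop:maxindep_minsplit_param} was derived from Lemma \ref{lemma: maxindep_minsplit}. The lemma tells us that, whenever a split irredundant set exists for $G$, every minimal split dominating set is also a maximal split irredundant set. So I would first observe that the hypothesis that a split irredundant set is defined for $G$ (equivalently, that the parameters $ir_s(G)$ and $\mbox{IR}_s(G)$ are defined) is exactly the standing assumption needed to invoke the lemma; under that assumption the collection of minimal split dominating sets is a subcollection of the collection of maximal split irredundant sets.

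For the first inequality, $\Gamma_s(G) \leq \mbox{IR}_s(G)$: let $S$ be a minimal split dominating set achieving $|S| = \Gamma_s(G)$. By Lemma \ref{lemma: minsplit_maxirred}, $S$ is a maximal split irredundant set, so $|S|$ is one of the sizes over which the maximum defining $\mbox{IR}_s(G)$ is taken; hence $\Gamma_s(G) = |S| \leq \mbox{IR}_s(G)$. For the second inequality, $ir_s(G) \leq \gamma_s(G)$: let $S$ be a minimum split dominating set, so $|S| = \gamma_s(G)$. A minimum split dominating set is in particular minimal (no proper subset can be split dominating, else it would be smaller), so again by Lemma \ref{lemma: minsplit_maxirred} it is a maximal split irredundant set; therefore $ir_s(G) \leq |S| = \gamma_s(G)$, since $ir_s(G)$ is the minimum size of a maximal split irredundant set.

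There is essentially no obstacle here: the only point that deserves a sentence is the remark that a minimum split dominating set is minimal, which is immediate from the definition of $\gamma_s$. Accordingly I would keep the proof to a single line, writing ``It follows from Lemma \ref{lemma: minsplit_maxirred}'' in parallel with the proof of Proposition \ref{prop:maxindep_minsplit_param}, or at most expand it to the two short sentences above to make the selection-of-extremal-sets argument explicit.

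\begin{proof}
It follows from Lemma \ref{lemma: minsplit_maxirred}: a minimum split dominating set is minimal and hence, by the lemma, a maximal split irredundant set, giving $ir_s(G) \leq \gamma_s(G)$; a minimal split dominating set of size $\Gamma_s(G)$ is likewise a maximal split irredundant set, giving $\Gamma_s(G) \leq \mbox{IR}_s(G)$.
\end{proof}
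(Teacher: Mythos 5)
Your proposal is correct and follows exactly the paper's route: the paper's entire proof is the one-liner ``It follows from Lemma \ref{lemma: minsplit_maxirred},'' and your expanded version (a minimum split dominating set is minimal, hence by the lemma a maximal split irredundant set, and a largest minimal split dominating set is likewise maximal split irredundant) just makes that selection-of-extremal-sets argument explicit.
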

\begin{proof}
	It follows from Lemma \ref{lemma: minsplit_maxirred}.
\end{proof}

\end{section}

\begin{section}{Analog to the Domination Chain and Properties of Parameters}

Analogous to the domination chain, we have the following:

\begin{theorem} The split dominating chain holds for graph $G$ when both the split independent set and split irredundant set exist:
\[ir_s(G) \leq \gamma_s(G) \leq i_s(G) \leq \beta_s(G) \leq \Gamma_s (G) \leq \mbox{IR}_s(G)\]
\end{theorem}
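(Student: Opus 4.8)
The plan is to assemble the chain from the three blocks of inequalities that have essentially already been proved, and to fill in only the two genuinely new links, namely $\gamma_s(G)\le i_s(G)$ at the bottom and $\beta_s(G)\le\Gamma_s(G)$ at the top — but in fact even these are available: Proposition~\ref{prop:maxindep_minsplit_param} gives $\beta_s(G)\le\Gamma_s(G)$ and $\gamma_s(G)\le i_s(G)$, while Proposition~\ref{prop:minsplit_maxirred_param} gives $\Gamma_s(G)\le\mathrm{IR}_s(G)$ and $ir_s(G)\le\gamma_s(G)$. So the outer four inequalities and the two middle ones on the independence/irredundance side are all in hand. What remains to be argued from scratch is the central inequality $i_s(G)\le\beta_s(G)$.

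First I would dispose of $i_s(G)\le\beta_s(G)$. Both parameters range over the \emph{same} family of sets — the maximal split independent sets — with $i_s$ the minimum size and $\beta_s$ the maximum size over that family. Since the hypothesis of the theorem is precisely that a split independent set exists, and since a greedy/Zorn-type argument shows any split independent set extends to a maximal one (or, more simply, since the collection of maximal split independent sets is nonempty under this hypothesis), the minimum over a nonempty finite collection is at most the maximum over that same collection. This is the one place where the existence hypothesis is actually used in a non-vacuous way, so I would state it carefully: if no split independent set exists the parameters $i_s,\beta_s$ are undefined (min/max over the empty set), which is why the theorem is phrased conditionally.

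Then I would simply chain everything together. From Proposition~\ref{prop:minsplit_maxirred_param}: $ir_s(G)\le\gamma_s(G)$. From Proposition~\ref{prop:maxindep_minsplit_param}: $\gamma_s(G)\le i_s(G)$. From the paragraph above: $i_s(G)\le\beta_s(G)$. From Proposition~\ref{prop:maxindep_minsplit_param} again: $\beta_s(G)\le\Gamma_s(G)$. From Proposition~\ref{prop:minsplit_maxirred_param} again: $\Gamma_s(G)\le\mathrm{IR}_s(G)$. Concatenating gives the asserted six-term chain.

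The proof is therefore short, and there is no serious obstacle — the real content was front-loaded into Lemmas~\ref{lemma: maxindep_minsplit} and~\ref{lemma: minsplit_maxirred}. The only point requiring a moment's care is making sure that under the stated existence hypotheses \emph{all six} quantities are simultaneously well-defined: one should check that existence of a split independent set forces existence of a maximal split independent set (hence $i_s,\beta_s$ defined), and that Lemma~\ref{lemma: maxindep_minsplit} then supplies a minimal split dominating set (hence $\gamma_s,\Gamma_s$ defined), while existence of a split irredundant set together with Lemma~\ref{lemma: minsplit_maxirred} handles $ir_s,\mathrm{IR}_s$. Once that bookkeeping is noted, the chain falls out immediately from the two propositions plus the trivial $\min\le\max$ over the maximal split independent sets.
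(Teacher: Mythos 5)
Your proposal is correct and follows essentially the same route as the paper: the chain is assembled from Propositions~\ref{prop:maxindep_minsplit_param} and~\ref{prop:minsplit_maxirred_param}, with the middle link $i_s(G)\le\beta_s(G)$ being the trivial $\min\le\max$ over the (nonempty) family of maximal split independent sets. Your only addition is making that middle link and the well-definedness bookkeeping explicit, which the paper leaves implicit.
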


\begin{proof}
Follows from Propositions \ref{prop:maxindep_minsplit_param} and \ref{prop:minsplit_maxirred_param}.
\end{proof}

These bounds are tight. For example,
\begin{exmp} For the bipartite graph $K_{m,n}$, 
	\[ir_s(G) = \gamma_s(G) = i_s(G) = \min\{m,n\}\]
and
	\[\beta_s(G) = \Gamma_s (G) = \mbox{IR}_s(G) = \max\{m,n\}.\]
When $m=n$, you obtain equality between all of the parameters.
\end{exmp}

\begin{exmp} For a path $P_n$,
	\[ir_s(G) = \gamma_s(G) = i_s(G) = \left\lceil \frac{n}{3} \right\rceil\]
and
	\[\beta_s(G) = \Gamma_s (G) = \mbox{IR}_s(G) = \left\lceil \frac{n}{2} \right\rceil.\]
\end{exmp}

\begin{exmp} For a cycle $C_n$,
	\[ir_s(G) = \gamma_s(G) = i_s(G) = \left\lceil \frac{n}{3} \right\rceil\]
and
	\[\beta_s(G) = \Gamma_s (G) = \mbox{IR}_s(G) = \left\lfloor \frac{n}{2} \right\rfloor.\]
\end{exmp}

The connectivity of a graph, $k(G)$, provides us with a lower bound for all of the parameters:
\begin{prop} For $G=(V,E)$,
	\[k(G) \leq ir_s(G) \quad k(G) \leq \gamma_s(G) \quad k(G) \leq i_s(G)\] 
\end{prop}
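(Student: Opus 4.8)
The plan is to observe that the defining connectivity requirement shared by all three flavors of split set is exactly the condition of being a vertex cut set, so that each of the three parameters is bounded below by the size of a smallest vertex cut set, namely $k(G)$. Thus the statement is really a direct consequence of the definitions together with the split dominating chain.

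First I would fix a graph $G$ for which the relevant maximal split sets exist, so that the parameters in question are defined, and recall that by Definition~\ref{def: irred split_set} (respectively Definition~\ref{def: indep split_set}, Definition~\ref{def: split_set}) every split irredundant set (respectively every split independent set, every split dominating set) $S$ has the property that $\langle V\setminus S\rangle$ is disconnected or a $K_1$; this is precisely the definition of a vertex cut set. Hence $|S|\ge k(G)$ for every such $S$, since $k(G)$ is by definition the minimum size of a vertex cut set.

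Next I would apply this to the extremal sets. Let $S$ be a maximal split irredundant set attaining $|S|=ir_s(G)$; as $S$ is a vertex cut set, $ir_s(G)=|S|\ge k(G)$. The same argument with a maximal split independent set attaining $i_s(G)$ gives $k(G)\le i_s(G)$, and with a minimum split dominating set gives $k(G)\le\gamma_s(G)$ (this last inequality already appears in \cite{KulliJanakiramSplit}). Alternatively, once $k(G)\le ir_s(G)$ is established, the inequalities $k(G)\le\gamma_s(G)$ and $k(G)\le i_s(G)$ follow immediately from the split dominating chain $ir_s(G)\le\gamma_s(G)\le i_s(G)$ established above.

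I do not expect a genuine obstacle here: the claim is essentially a restatement of the definition of a vertex cut set. The only points that merit a word of care are the degenerate situations — ensuring that the relevant maximal split sets exist so that $ir_s(G)$, $\gamma_s(G)$, and $i_s(G)$ are actually defined, and the borderline case in which $V\setminus S$ happens to be empty rather than a $K_1$ or a disconnected graph, both of which are handled by the standing assumption that $G$ is connected and has no isolated vertices.
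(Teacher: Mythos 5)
Your proposal is correct and matches the paper's argument, which simply states that the inequalities follow from the definitions: every split irredundant, split independent, or split dominating set is by definition a vertex cut set, so its cardinality is at least $k(G)$. Your extra remarks about existence of the extremal sets and the alternative derivation via the split domination chain are fine but not needed beyond the paper's one-line justification.
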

\begin{proof}
	Follows from the definitions.
\end{proof}

\end{section}

\begin{section}{Nonsplit Domination, Irredundance, and Independence}

In 2000, Kulli and Janakiram considered the opposite side of the spectrum, nonsplit domination with its related parameters.

\begin{defn}(Kulli and Janakiram \cite{KulliJanakiramNonsplit}) \label{def: nonsplit_set}
A  dominating set $S \subseteq V(G)$ is a \emph{nonsplit dominating set} if the induced subgraph $\langle V \setminus S \rangle$ is connected.
\end{defn}

\begin{defn} \cite{KulliJanakiramNonsplit} \label{def: minimal nonsplit_set}
	A nonsplit dominating set $S$ is a \emph{minimal nonsplit dominating set} if
		\begin{enumerate}
			\item every vertex $v$ has a private neighbor with respect to $S$ or
			\item for $v\in S$, the induced graph $\langle (V\setminus S) \cup \{u\} \rangle$ is disconnected or a $K_1$.
		\end{enumerate}	
\end{defn}

\begin{defn} Let $G = (V,E)$ be a graph. Then
	\begin{itemize}
		\item $\gamma_{ns}(G) = \min \{|S|: \text{S is a nonsplit dominating set}\}$ is the \emph{nonsplit domination number}, and
		\item $\Gamma_{ns}(G) = \max \{|S|: \text{S is a minimal nonsplit dominating set}\}$ is the \emph{upper nonsplit domination number}.
	\end{itemize}
\end{defn}

We will briefly consider the concept of nonsplit with that of independence and irredundance and the associated parameters.

\begin{defn} \label{def: indep nonsplit_set}
A set $S$ is a \emph{nonsplit independent set} if
	\begin{enumerate}
		\item $S$ is independent and 
		\item the induced graph $\langle V \setminus S \rangle$ is connected.
	\end{enumerate}
\end{defn}

\begin{defn} \label{def: maxindep nonsplit_set}
A \emph{maximal nonsplit independent} set $S$ is a nonsplit independent set such that for any $v \in V \setminus S$  one of the following is true
	\begin{enumerate}
		\item $S \cup \{v\}$ is not independent, i.e. $S$ is an independent dominating set
		\item the induced graph $\langle V-\{S \cup \{v\}\} \rangle$ is disconnected or a $K_1$.
	\end{enumerate}	
\end{defn}

\begin{defn} Let $G=(V,E)$ be a graph. Then
	\begin{itemize}
		\item $i_{ns}(G) = \min \{|S|: \text{S is a maximal nonsplit independent set}\}$ is the\\  \emph{lower nonsplit independent number}, and
		\item $\beta_{ns}(G) = \max \{|S|: \text{S is a maximal nonsplit independent set}\}$ is the \emph{nonsplit independent number}.
	\end{itemize}
\end{defn}

\begin{defn} \label{def: irred nonsplit_set}
A \emph{nonsplit irredundant set} is a set $S$ such that 
	\begin{enumerate}
		\item for $u \in S$, $u$ has a private neighbor with respect to $V(S)$ and
		\item the induced graph $\langle V \setminus S \rangle$ is connected.
	\end{enumerate}
\end{defn}

\begin{defn} \label{def: max irred nonsplit_set}
A \emph{maximal nonsplit irredundant set} $S$ is a nonsplit irredundant set such that for every $v \in V \setminus S$ one of the following holds true
	\begin{enumerate}
		\item $ v$ does not have a private neighbor with respect to $V(S \cup \{v\})$  \textbf{or}
		\item the induced graph $\langle V \setminus \{S \cup v\} \rangle $ is disconnected or a $K_1$.
	\end{enumerate}	
\end{defn}

\begin{defn} Let $G=(V,E)$ be a graph. Then
	\begin{itemize}
		\item $ir_{ns}(G) = \min \{|S|: \text{S is a maximal nonsplit irredundant set}\}$ is the\\ \emph{lower nonsplit irredundance number}, and
		\item $\mbox{IR}_{ns}(G) = \max \{|S|: \text{S is a maximal nonsplit irredundant set}\}$ is the \emph{upper nonsplit irredundance number}. 
	\end{itemize}
\end{defn}	

We note that there is no direct relationship between the parameters of nonsplit domination, irredundance, and independence for any generic graph.

\begin{exmp}
For a path $P_n$, 
	\[\gamma_{ns} (P_n) = \Gamma_{ns}(P_n) =  n-2\]
	\[i_{ns}(P_n) = \beta_{ns}(P_n) = 2\]
	\[ir_{ns}(P_n) = IR_{ns}(P_n) = 2\] 
In the case of a path $P_n$, $n \geq 5$, the maximal nonsplit independent set is not a nonsplit dominating set.\\
For a cycle $C_n$,
	\[\gamma_{ns} (C_n) = \Gamma_{ns}(C_n) = n-2\]
	\[i_{ns}(P_n) = \beta_{ns}(P_n) = 1\]
	\[ir_{ns}(P_n) = IR_{ns}(P_n) = 2\] 
In the case of a cycle $C_n$, $n \geq 4$, the maximal nonsplit independent set is not a nonsplit dominating set.\\
For a wheel $W_n$,
	\[\gamma_{ns} (W_n) = 1; \quad \Gamma_{ns} (W_n) = \Gamma(C_n) \]
	\[i_{ns}(W_n) = 1;  \quad \beta_{ns}(W_n) = \beta(C_n)\]
	\[ir_{ns}(W_n) = 1; \quad IR_{ns}(W_n) = IR(C_n)\]	
For a complete bipartite graph $K_{m,n}$, $2\leq m\leq n$,
	\[ \gamma_{ns}(K_{m,n})= \Gamma_{ns}(K_{m,n})=2\]
	\[ i_{ns}(K_{m,n}) = m-1, \quad \beta_{ns}(K_{m,n}) = n-1\]
	\[ir_{ns}(K_{m,n})  = 2, \quad IR_{ns}(K_{m,n}) =n-1\]
\end{exmp}

\section{Open Problems}
In the future, we would like to consider the relationship between the nonsplit parameters, the split parameters and the original parameters, $i, \beta, \gamma, \Gamma, ir,$ and $Ir$. In addition, we would like to find better bounds for the parameters introduced in this paper. 

\end{section}


\end{document}